\date{}
 \theoremstyle{plain}
\newtheorem{theo}{Theorem}[section] \theoremstyle{plain}
 \theoremstyle{plain}
 \theoremstyle{definition}
 \theoremstyle{definition}
 \theoremstyle{remark}
\newtheorem{rem}[theo]{Remark} \theoremstyle{remark}
\newtheorem{proposition}[theo]{Proposition}
\begin{document}
\setcounter{page}{1}

\title{ Minimal and Maximal Operator Space Structures on Banach Spaces }

\author{Vinod Kumar  P. }
\address{Department of Mathematics,\\ Thunchan Memorial Govt. College, Tirur,\\
Kerala , India. \em E-mail: vinodunical@gmail.com  \em}
   \author{  M. S. Balasubramani}
   \address{Department of Mathematics, University of Calicut,
Calicut University. P. O., Kerala, India. \em E-mail:
 msbalaa@rediffmail.com \em}

\date{}
\maketitle

\begin{abstract}

Given a Banach space $X$, there are many operator space structures
possible on $X$, which all have $X$ as their first matrix level.
Blecher and Paulsen \cite{bp91}  identified two extreme
 operator space structures on $X$, namely $Min(X)$ and $Max(X)$ which represents respectively, the smallest and the largest
operator space structures admissible on $X$.  In this note, we
consider the subspace and the quotient space structure of minimal
and maximal operator spaces.
\flushleft {AMS Mathematics Subject Classification(2000) No: 46L07,47L25}\\
\flushleft {Key Words: operator spaces,  maximal
 operator spaces, minimal operator spaces, submaximal spaces,  $Q$-spaces.}

\end{abstract}
\thispagestyle{empty}
\section{Introduction }
 Operator spaces form a natural quantization of Banach spaces and
their study took a rigorous form with the representation theorem
obtained by Z. J. Ruan \cite{r88} in 1988 and after that it has
seen considerable development with applications to the theory of
operator algebras and various aspects of operator spaces are being
studied extensively.

 A {\em concrete operator space\em} \index{operator space!concrete} $X$ is a closed
 linear subspace of $ {B(\mathcal{H})} $, for some Hilbert space $\mathcal H$.
 Here, in each matrix level $M_n(X) $, we have a norm
$\left\| . \right\|_n$, induced by the inclusion $ M_n(X) \subset
M_n( B(\mathcal{H}))  $, where the norm in $M_n({B(\mathcal{H})})$
is given by the natural identification  $M_n({B(\mathcal{H}))}
\cong  {B(\mathcal{H}}^n)$.\ More precisely, for $[x_{ij}]\in M_n
(X)$, we have
$$\| [x_{ij}]\|_n = \mbox{sup} \{ ( \Sigma_{i=1}^n
\left\|\Sigma_{j=1}^n x_{ij}h_j \right\|_{\mathcal{H}}^2 )^{1/2}\
|\  h_j \in \mathcal{H}, (\Sigma\| h_j\|_\mathcal{H}^2 )^{1/2}\leq
1 \}. $$ Thus, a concrete operator space carries not just an
inherited norm, but these additional sequence of matrix norms.

 An {\em abstract operator space\em},\index{operator space!abstract} or
 simply an {\em operator space\em} is a pair $ (X, \{  \left\| . \right\| _n \}_{n \in \mathbb{N}} )$
 consisting of a linear space $X$ and a complete norm $ \left\| . \right\| _n $ on  $ M_n(X) $ for every $ n\in \mathbb{N} $,
 such that there exists  a linear complete isometry $\varphi : X \rightarrow B(\mathcal H)$ for some Hilbert space
  $\mathcal H $.
 The sequence of matrix norms $ \{  \left\| . \right\| _n \}_{n \in \mathbb{N}} $ is called an
{\em operator space structure \em} on the linear space $X$.
 An operator space structure $ \{  \left\| . \right\| _n \}_{n \in \mathbb{N}} $ on a Banach space $(X, \left\|.\right\|)$
 is said to be an \textit{{{admissible}}} operator space structure on
 $X$, if $   \left\| . \right\|_1 = \left\|.\right\| $.
An important type of operator spaces are those $X \subset
B(\mathcal H)$
 which are isometric (as a Banach space) to a Hilbert space. Such
  spaces are called \emph{{Hilbertian}} operator spaces
  \cite{gp03}.

If $X$ is a Banach space, then any linear isometry from $X$ to
$B(\mathcal{H})$, for some Hilbert space
 $\mathcal{H}$, endows an operator space structure on $X$. Generally, for a Banach space $X$,
 the matrix norms so obtained are not unique. In other words, a given Banach space has, in general, many realizations
as an operator space. Blecher and  Paulsen  observed that the set
of all operator space structures admissible on a given Banach
space $X$ admits a minimal and a maximal element. The minimal and
the maximal operator space structures on  a Banach space were
introduced and their dual relations were explored in \cite{bp91}
and further structural properties were investigated in \cite{vp92}
and \cite{vp96}. In what follows, we focus on the
 subspace and quotient space structure of minimal and maximal
operator spaces.

It is known that any subspace of a minimal operator space is again
minimal, but quotient of a minimal space need not be minimal.
 A subspace of a maximal operator
space need not be maximal.  But quotient spaces inherits the
maximality. We address  the following question: If every proper,
nontrivial subspace of an operator space $X$ is minimal (maximal),
is $X$ minimal? (maximal?). We give an example to show that the
answer is negative in the case of finite dimensional operator
spaces, and  show that the answer is affirmative in the case of
infinite dimensional operator spaces.

Regarding quotient operator spaces, we prove that, if $X$ is an
infinite dimensional operator space, and if  every quotient of
 $X$ by a proper closed nontrivial subspace of $X$ is minimal (maximal), then
$X$ is minimal (maximal). We also give an example to show that the
result is invalid in the case of finite dimensional operator
spaces.
\section{Minimal and Maximal Operator Spaces}
 Let $ X $ be a Banach space and $X^*$ be its dual
space. Let $ K = Ball(X^*)$ be the  closed unit ball of the dual
space of $X$
 with its weak* topology. Then the  canonical embedding $J : X
   \rightarrow C(K) $,  defined by
   $J(x)(f) = f(x), x \in X $ and $ f \in K $ is a linear isometry. Since,
    subspaces of $C^*$-algebras    are operator spaces (by Gelfand-Naimark Theorem),
      this identification of $X$ induces matrix norms
    on $M_n(X)$ that makes $X$ an operator space and
  the matrix norms on $X$ are given by\\ \centerline{$ \left\| [x_{ij}] \right\|_n
  = \mbox{sup}\{  \left\| [ f(x_{ij}) ]  \right\| \ |\   f\in   K\}$}
 for all  $[x_{ij}] \in M_n(X)$ and for all $n\in \mathbb{N}$.

 Here $\left\| [ f(x_{ij}) ]  \right\|$ indicates
the norm of the
 scalar $n \times n$ matrix $ [ f(x_{ij}) ]  $ viewed as a linear
 map from $\mathbb{C}^n \to \mathbb{C}^n$.

The above defined operator space structure on $X$ is called
   the {\emph{minimal operator space structure }}\index{operator space structure!minimal}on $X$, and we denote this
   operator space as $Min(X)$.
Thus, $Min(X)$ can be regarded as a space of continuous functions
defined on the  closed unit ball of  $X^*$.
 An operator space $X$
is  said to be \emph{{minimal}} if $Min(X)= X$.
 The minimal operator space structure of a Banach space is characterized by
   the universal property that for any arbitrary operator space $Y$,
   any bounded linear map $\varphi : Y \rightarrow Min(X)$ is
   completely bounded and satisfies {$\left\| \varphi : Y \rightarrow Min(X)  \right\|_{cb} =
    \left\| \varphi : Y \rightarrow X  \right\|.$}
The above described universal property implies that $Min(X)$ is
indeed the smallest admissible
    operator space structure on a Banach space $X$. For, if $\{ \|.\|^{'}_n
    \}_{n \in \mathbb{N}}$ is any other admissible operator space structure on $X$,
    and if $\widetilde{X}$ denotes the space $X$ with these matrix
    norms, then $id: \widetilde{X} \to Min(X)$ is a linear
    isometry and $\|id\|_{cb}= \|id\|=1$. This shows that
    $\|.\|^{'}_n$ dominates the corresponding matrix norms in $Min(X)$.
It is known that, an operator space is minimal if and only if it
is completely isometric to a subspace of a commutative
$C^*$-algebra \cite{er02}.

If $X$ is a Banach space, there is a \emph{maximal} way to
consider it as an operator space. For $[x_{ij}]\in M_n(X)$, the
matrix norms given by \\ \centerline{$\left\| [x_{ij}]\right\| =
\mbox{sup} \left\| [\varphi(x_{ij})]  \right\| $} where the
supremum is taken over all operator spaces $Y$ and all linear maps
$\varphi \in Ball(B(X, Y))$,  define an admissible operator space
structure on $X$. We denote this operator space as $Max(X)$ and is
called the \emph{{maximal operator space structure}} on $X$. An
operator space $X$ is said to be  \emph{{maximal}} if $Max(X) = X
$. By the definition of $Max(X)$, any operator space structure
that we can put on $X$, must be smaller than $Max(X)$.

The maximal operator space structure of a Banach space is
characterized by the universal property that for any arbitrary
operator space $Y$, any bounded linear map $\varphi : Max(X)
\rightarrow Y$ is completely bounded and satisfies  \\
\centerline{$\left\| \varphi : Max(X) \rightarrow Y  \right\|_{cb}
=
    \left\| \varphi : X \rightarrow Y  \right\|.$} Thus, if $X$ and
    $Y $ are Banach spaces and $\varphi \in B (X, Y)$, then
    $\varphi$ is completely bounded and $\left\| \varphi   \right\|_{cb} =
    \left\| \varphi   \right\|$, when considered  as a map from $Max( X) \rightarrow
    Y.$

To see that the space $Max(X)$ satisfies the above mentioned
property, let  $\varphi : Max(X) \rightarrow Y$ be a bounded
linear map. Then $u=\dfrac{\varphi}{\|\varphi\|} \in Ball(X,Y)$,
and by definition of the matrix norms of $Max(X)$, $\|
[u(x_{ij})]\|$ is dominated by $\|[x_{ij}]\|_{M_n(Max(X))}$, for
all $[x_{ij}] \in M_n(X)$ and for all $n \in \mathbb{N}$, showing
that $\|u\|_{cb}\leq 1$. Thus $\|{\varphi}\|_{cb}\leq
{\|\varphi\|}$. Therefore, $\varphi : Max(X) \rightarrow Y$ is
completely bounded and $\left\| \varphi   \right\|_{cb} =
    \left\| \varphi   \right\|$.

 The above described universal property implies that $Max(X)$ is indeed the largest admissible
    operator space structure on a Banach space $X$. For, if $\{ \|.\|^{'}_n
    \}_{n \in \mathbb{N}}$ is any other admissible operator space structure on $X$,
    and if $\widetilde{X}$ denotes the space $X$ with these matrix
    norms, then $id: Max(X) \to \widetilde{X}$ is a linear
    isometry and $\|id\|_{cb}= \|id\|=1$. This shows that
    $\|.\|^{'}_n$ is dominated by the corresponding matrix norms in $Max(X)$.

 The following proposition gives  characterizations of minimal and maximal operator
spaces up to complete  \emph{{isomorphisms}}. These
characterizations identify  larger classes of operator space
structures which are completely isomorphic (need not be completely
isometric) to minimal and to maximal operator spaces.
\begin{proposition}\label{new}
\item(i). An operator space $X$ is completely isomorphic to a
minimal operator space if and only if for any arbitrary operator
space $Y$,  any completely bounded linear bijection $\varphi : X
\rightarrow Y$ is a  complete isomorphism.
    \item(ii). An operator space $X$ is completely
isomorphic to a maximal operator space if and only if for any
arbitrary operator space $Y$,  any completely bounded linear
bijection $\varphi : Y \rightarrow X$ is  complete isomorphism.
\end{proposition}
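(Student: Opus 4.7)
The plan is to prove both equivalences by direct invocation of the universal properties of $Min(\cdot)$ and $Max(\cdot)$ already established in the section, together with the open mapping theorem to pass from a bounded bijection to a bounded inverse.

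For the forward direction of (i), I would start with a complete isomorphism $\psi : X \to Min(Z)$ for some Banach space $Z$, and an arbitrary completely bounded bijection $\varphi : X \to Y$; the goal is to show that $\varphi^{-1}$ is completely bounded. Since $\varphi$ is a bounded bijection between Banach spaces, the open mapping theorem gives that $\varphi^{-1}$ is bounded, and hence so is the composition $\psi \circ \varphi^{-1} : Y \to Min(Z)$. The universal property of $Min$ then automatically upgrades this to a completely bounded map, and composing with $\psi^{-1}$ exhibits $\varphi^{-1} = \psi^{-1} \circ (\psi \circ \varphi^{-1})$ as a composition of completely bounded maps. The converse of (i) is a one-line argument: consider the formal identity $id : X \to Min(X)$, where $Min(X)$ carries the minimal structure on the underlying Banach space of $X$. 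This is a bijective linear isometry, and the universal property of $Min$ gives $\|id\|_{cb} = \|id\| = 1$, so $id$ is a completely bounded bijection. The hypothesis forces it to be a complete isomorphism, so $X$ is completely isomorphic to $Min(X)$.

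Part (ii) is proved by the mirror argument. In the forward direction, given a complete isomorphism $\psi : Max(Z) \to X$ and a completely bounded bijection $\varphi : Y \to X$, the map $\varphi^{-1} \circ \psi : Max(Z) \to Y$ is bounded (again by the open mapping theorem applied to $\varphi$), so the universal property of $Max$ makes it completely bounded, and then $\varphi^{-1} = (\varphi^{-1} \circ \psi) \circ \psi^{-1}$ is completely bounded. The converse uses the identity $id : Max(X) \to X$, which is a completely bounded bijection by the universal property of $Max$; the hypothesis promotes it to a complete isomorphism.

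The main point to keep straight — really the only subtlety — is the orientation of the two universal properties: $Min$ gives automatic complete boundedness for maps \emph{into} it, whereas $Max$ gives it for maps \emph{out of} it. This is why in case (i) the auxiliary map is chosen to land in $Min(Z)$, while in case (ii) it is chosen to start at $Max(Z)$. Beyond this bookkeeping, no estimate or construction is required, and I do not anticipate any genuine obstacle; the proposition is essentially a reformulation of the universal properties of $Min$ and $Max$ at the level of complete isomorphism rather than complete isometry.
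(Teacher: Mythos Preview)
Your proposal is correct and follows essentially the same route as the paper's own proof: factor $\varphi^{-1}$ through the given complete isomorphism with $Min(Z)$ (resp.\ $Max(Z)$), invoke the universal property to upgrade boundedness to complete boundedness, and for the converse apply the hypothesis to the formal identity $id:X\to Min(X)$ (resp.\ $id:Max(X)\to X$). Your explicit appeal to the open mapping theorem to ensure $\varphi^{-1}$ is bounded is a point the paper leaves implicit, and your spelling out of part (ii) matches what the paper dismisses as ``similar''.
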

\begin{proof}\hfill

We prove only (i) and (ii) will follow in a similar way. Assume
that $\varphi: X \to Y$ is a completely bounded  linear bijection.
Let $\psi: X \rightarrow Min(Z)$ be a complete isomorphism. Then
by the universal property of minimal operator spaces, $\psi \circ
\varphi^{-1} : Y \rightarrow Min(Z)$ is completely bounded.
Therefore,  $\| \varphi^{-1}\|_{cb} = \| \psi^{-1}\circ \psi \circ
\varphi^{-1}\|_{cb}\leq \| \psi^{-1}\|_{cb} \| \psi
\circ\varphi^{-1}\|_{cb} < \infty.$ This shows that $\varphi : X
\rightarrow Y$ is a  complete isomorphism. For the converse, take
$Y = Min(X)$ and consider the formal identity mapping $id: X
\rightarrow Min(X)$. By assumption, $id^{-1}: Min(X) \rightarrow
X$ is completely bounded, showing that $X$ is completely
isomorphic to $Min(X)$.
\end{proof}
\begin{rem}\hfill

The above theorem describes the  complete isomorphism class of
minimal and maximal   operator spaces.  Recently, T. Oikhberg
\cite{oik07} proved that the complete isomorphism class of any
infinite dimensional operator space has infinite diameter with
respect to the completely bounded Banach-Mazur distance. i.e., for
$n \in \mathbb{N}$, $\forall \ C > 0$, and for any infinite
dimensional operator space $X$, there exists an operator space
structure $\tilde{X}$ on $X$ such that the identity map $id: X \to
\tilde{X}$ is a complete isomorphism, $id^{(n)}$ is an isometry,
and $d_{cb}(X, \tilde{X})> C.$ Measuring the diameter of the
complete isomorphism class of an operator space is still open in
the case of finite dimensional operator spaces.
\end{rem}

Let $X$ be an infinite dimensional Banach space. Then  operator
space structures on $X$, which are  completely isomorphic to
$Min(X)$ can be constructed as follows:
 Choose an operator space $Y$ which is isometric to $X$ and
completely isomorphic to $Min(X)$, say $ v: Min(X) \to Y $ be a
complete isomorphism (From the above remark, such a choice is
always possible). On $X$, define a new operator space structure,
say $\widetilde X$, by setting $\|[x_{ij}]\|_{M_n(\widetilde X)}=
\|[v(x_{ij})]\|_{M_n(Y)}$, $\forall \ [x_{ij}] \in M_n(X)$ and
$\forall n \in \mathbb{N}$. Then, $\widetilde X$ and $Y$ are
completely isometrically isomorphic, and so $Min(X)$ and
$\widetilde X$ are completely isomorphic. In a similar way, we can
construct operator space structures on $X$ which are completely
isomorphic to $Max(X)$.

The following theorem describes the dual nature of minimal and
maximal operator space structures on a Banach space.
 \begin{theo}[\cite{b92}]
 For any Banach space $X$, we have $Min(X)^* \cong Max(X^*) $ and $ Max(X)^* \cong Min
 (X^*)$ completely isometrically.
 \label{minmaxduality}
 \end{theo}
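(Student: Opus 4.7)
The plan is to prove the two completely isometric identifications separately, in each case computing both matrix norms via the universal properties recorded above, with Goldstine's theorem (in its standard and matricial forms) bridging the Banach-space data.

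The second identity $Max(X)^{*}\cong Min(X^{*})$ is the more direct. By the definition of the operator space dual, $M_n(Max(X)^{*}) = CB(Max(X), M_n)$ isometrically, and the universal property of $Max(X)$ recalled in the excerpt collapses this cb-norm to the operator norm, giving $\|[f_{ij}]\|_{M_n(Max(X)^{*})} = \sup\{\|[f_{ij}(x)]\|_{M_n}:x\in Ball(X)\}$. On the other side, unravelling the definition of the minimal structure on $X^{*}$ via the embedding $X^{*}\hookrightarrow C(Ball(X^{**}))$ gives $\|[f_{ij}]\|_{M_n(Min(X^{*}))} = \sup\{\|[\xi(f_{ij})]\|_{M_n}:\xi\in Ball(X^{**})\}$. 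The functional $\xi\mapsto\|[\xi(f_{ij})]\|_{M_n}$ is $w^{*}$-continuous on $X^{**}$ (each entry is $w^{*}$-continuous and the matrix norm is continuous in entries), and Goldstine's theorem makes $Ball(X)$ $w^{*}$-dense in $Ball(X^{**})$, so the two suprema coincide.

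For the first identity $Min(X)^{*}\cong Max(X^{*})$, one direction is automatic: $Min(X)^{*}$ is an admissible operator space structure on $X^{*}$ (since $CB(Min(X),\mathbb{C}) = X^{*}$ isometrically, the first matrix level is correct), so the maximality of $Max(X^{*})$ recalled in the excerpt yields $\|\cdot\|_{M_n(Min(X)^{*})}\leq\|\cdot\|_{M_n(Max(X^{*}))}$ at every matrix level. For the reverse inequality I would expand the $Max$-norm as $\sup\{\|[\varphi(f_{ij})]\|_{M_n(Y)}:\varphi\in Ball(B(X^{*},Y))\}$, embed each $Y\hookrightarrow B(H)$ completely isometrically, and compress by finite-rank projections, reducing the supremum to $Y=M_k$ for varying $k$. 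Any $\varphi\in Ball(B(X^{*},M_k))$ arises, under the isometric identification $B(X^{*},M_k)\cong M_k(X^{**})$, from some matrix $[\xi_{rs}]\in M_k(X^{**})$ with $\sup_{g\in Ball(X^{*})}\|[\xi_{rs}(g)]\|_{M_k}\leq 1$. I would then invoke the matricial form of Goldstine's theorem for $Min$ to approximate $[\xi_{rs}]$ by a net $[x_{rs}^{\alpha}]$ in the unit ball of $M_k(Min(X))$ with $g(x_{rs}^{\alpha})\to\xi_{rs}(g)$ for every $g,r,s$. Using the canonical shuffle isometry $M_n(M_k)\cong M_k(M_n) = M_{nk}$, the matrix $[\varphi(f_{ij})]_{i,j}$ is, in the limit, the shuffle of the amplification $T^{(k)}[x_{rs}^{\alpha}]$, where $T:Min(X)\to M_n$, $T(x) = [f_{ij}(x)]$, represents $[f_{ij}]$. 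Hence $\|[\varphi(f_{ij})]\|_{M_n(M_k)}\leq\|T\|_{cb} = \|[f_{ij}]\|_{M_n(Min(X)^{*})}$, closing the loop.

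The principal obstacle is the matricial Goldstine step: entrywise Goldstine controls each $\xi_{rs}$ individually but does not preserve the $M_k(Min(X))$-norm of the approximants. I would establish it by a bipolar/Hahn--Banach argument on $Ball(M_k(Min(X)))\subset B(X^{*},M_k)$ in the topology of pointwise convergence on $X^{*}$: any hypothetical non-member of the closure can be separated by a continuous functional in that topology, which is a finite linear combination of entrywise evaluations, and chasing the resulting inequality through the scalar Goldstine theorem produces a contradiction.
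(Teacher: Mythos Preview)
The paper does not actually prove this theorem; it is quoted from Blecher \cite{b92} without argument, so there is nothing in the paper to compare against. I will therefore comment only on the internal soundness of your proposal.

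Your argument for $Max(X)^{*}\cong Min(X^{*})$ is correct and essentially the standard one: the universal property of $Max$ collapses $CB(Max(X),M_n)$ to $B(X,M_n)$, and Goldstine plus weak$^{*}$-continuity of the entrywise evaluation identifies the resulting supremum over $Ball(X)$ with the supremum over $Ball(X^{**})$ that defines the $Min(X^{*})$ matrix norms.

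For $Min(X)^{*}\cong Max(X^{*})$, your overall scheme (reduce the $Max$-supremum to targets $M_k$, identify $B(X^{*},M_k)$ with $M_k(X^{**})$, and then approximate by matrices over $X$) is correct, and the key step is indeed the ``matricial Goldstine'' statement that $Ball(M_k(Min(X)))$ is entrywise weak$^{*}$-dense in $\{[\xi_{rs}]\in M_k(X^{**}):\sup_{g\in Ball(X^{*})}\|[\xi_{rs}(g)]\|\le 1\}$. However, your proposed justification of that step has a gap. After the bipolar reduction you are left with showing that any separating functional $[g_{rs}]\in M_k(X^{*})$ which is bounded by $1$ on $Ball(M_k(Min(X)))$ is also bounded by $1$ on the larger ball in $M_k(X^{**})$. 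Scalar Goldstine only lets you approximate each $\xi_{rs}$ individually by elements of $Ball(X)$; it gives no control on the $M_k(Min(X))$-norm of the approximating matrix $[x_{rs}^{\alpha}]$, so the inequality $|\sum g_{rs}(x_{rs}^{\alpha})|\le 1$ is not available and no contradiction follows. In other words, ``chasing through scalar Goldstine'' does not close the argument as stated.

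The step is true, but it needs a genuine input beyond scalar Goldstine. Two clean ways to fill it: (i) use the principle of local reflexivity to produce, for given $[\xi_{rs}]$ and finite $F\subset X^{*}$, an operator $T:\mathrm{span}\{\xi_{rs}\}\to X$ with $\|T\|\le 1+\varepsilon$ and $g(T\xi)=\xi(g)$ for $g\in F$; then $x_{rs}=T\xi_{rs}$ lies in $(1+\varepsilon)Ball(M_k(Min(X)))$ and matches $[\xi_{rs}]$ on $F$; or (ii) more structurally, observe that $Min(X)\subset C(K)$ completely isometrically with $K=Ball(X^{*})$, hence $Min(X)^{**}\subset C(K)^{**}$ completely isometrically, and $C(K)^{**}$ is a commutative $C^{*}$-algebra, so $Min(X)^{**}$ is minimal, i.e.\ $Min(X)^{**}=Min(X^{**})$. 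Your matricial Goldstine is then just ordinary Banach-space Goldstine applied to the Banach space $M_k(Min(X))$, since $M_k(Min(X))^{**}=M_k(Min(X)^{**})=M_k(Min(X^{**}))$ isometrically for finite $k$. Route (ii) also gives a shorter proof of the whole identity: once you know $Min(X)^{**}=Min(X^{**})$ and (from the first half) $Max(X^{*})^{*}=Min(X^{**})$, the canonical complete isometry $Min(X)\hookrightarrow Min(X^{**})=Max(X^{*})^{*}$ dualises to the desired complete contraction $Max(X^{*})\to Min(X)^{*}$, without any tensor-shuffle bookkeeping.
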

 \section{Submaximal Spaces and $Q$-Spaces}
From the definition of minimal operator spaces, it is clear that
any subspace of a minimal operator space is again minimal, but a
quotient of a minimal space need not be minimal. An operator space
that is  a quotient of a minimal operator space (up to complete
isometric isomorphism) is called a {\emph{$Q$-space}} \cite{er}.
Since an operator space is minimal if and only if it is completely
isometric to a subspace of a commutative $C^*$-algebra
\cite{er02}, $Q$-spaces are precisely the quotients  of subspaces
of commutative $C^*$-algebras. Also, the category of $Q$-spaces is
stable under taking quotients and subspaces. $Q$-spaces were
investigated by M. Junge \cite{junge} and by Blecher and Le Merdy
\cite{blm}.

 $Q$-spaces need not be minimal, for instance, the
space $R \cap C$ is a $Q$-space, as it can be identified with the
quotient space $L^{\infty}[0,1]/S$, where $S$ is the subspace
orthogonal to the Rademacher functions \cite{gp98}. But, $R \cap
C$ is not minimal, and moreover $d_{cb}(R_n \cap C_n,
Min(\ell_2^n))= \sqrt{n}$ \ \cite{gp03}, so that $R \cap C$ is not
completely isomorphic to $Min(\ell_2)$.

Another example for a $Q$-space, which is not minimal is furnished
by the space of Hankel matrices that can be identified with
$L^{\infty}/H^{\infty}$. It can be shown that it has a subspace
which is  completely isometric to $R \cap C$, so that the space
$L^{\infty}/H^{\infty}$ is not minimal \cite{gp98}.

Subspace structure of various maximal operator spaces were studied
in \cite{tim04}. Subspaces of maximal operator spaces  are called
{\em submaximal\em} spaces  and in general they need not be
maximal, i.e., if $Y$ is a subspace of $X$ and if $x_{ij} \in Y$
for $i, j = 1, 2, ...,n$, then the norm of $[x_{ij}]$ in
$M_n(Max(Y))$ can be larger than the norm of $[x_{ij}]$ as an
element of $M_n(Max(X))$.  For example, the space $R + C$ is
submaximal, as it can be identified as a closed subspace of $Max (
L_1)$ spanned by the Rademacher functions\cite{hp93}.  But $R + C
$ is not maximal and moreover $d_{cb}(R_n + C_n, Max(\ell_2^n))=
\sqrt{n}$ \ \cite{gp03}, so that $R + C$ is not completely
isomorphic to $Max(\ell_2)$. However,  Paulsen  obtained the
following result.
\begin{theo}[\cite{vp96}]\label{vp}
Let $X$ be an infinite dimensional operator space and $x_{ij} \in
X$, for $ i, j = 1, 2, ...,n$, then \\ \centerline{$\left\|
[x_{ij}]
   \right\|_{M_n(Max(X))}= \mbox{inf} \ \{\  \left\| [x_{ij}]
   \right\|_{M_n(Max(Y))}\ |\  x_{ij} \in Y ,  Y \subset X, finite \ dimensional  \}
   $}
\end{theo}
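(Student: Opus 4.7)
The plan has two halves. The easy direction $\|[x_{ij}]\|_{M_n(Max(X))}\leq \inf_Y \|[x_{ij}]\|_{M_n(Max(Y))}$ follows from the universal property of $Max$ already recorded in the excerpt: for each finite dimensional $Y\subset X$ containing the $x_{ij}$, the inclusion $\iota:Y\hookrightarrow X$ is a linear isometry of norm one into the operator space $Max(X)$, so it lifts to a complete contraction $Max(Y)\to Max(X)$; evaluating at $[x_{ij}]\in M_n(Y)$ gives the inequality for each admissible $Y$.

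For the reverse inequality, set $a=\|[x_{ij}]\|_{M_n(Max(X))}$ and $c=\inf_Y\|[x_{ij}]\|_{M_n(Max(Y))}$, and show $c\leq a$. Let $\mathcal F$ be the collection of finite dimensional subspaces $Y\subset X$ containing $Y_0=\mathrm{span}\{x_{ij}\}$, directed by inclusion; infinite dimensionality of $X$ ensures $\mathcal F$ has no maximum. Writing $a_Y=\|[x_{ij}]\|_{M_n(Max(Y))}$, the easy direction shows that $(a_Y)_{Y\in\mathcal F}$ is a monotonically decreasing net with infimum $c\geq a$. For each $Y\in\mathcal F$, the definition of the $Max(Y)$-norm together with the completely isometric embedding of any operator space target into some $B(H)$ lets us choose $\varphi_Y:Y\to B(H_Y)$ with $\|\varphi_Y\|\leq 1$ and $\|[\varphi_Y(x_{ij})]\|_{M_n(B(H_Y))}\geq a_Y-1/\dim Y$.

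The crucial step is to assemble this family into a single contractive linear map on all of $X$. Fix a non-trivial ultrafilter $\mathcal U$ on $\mathcal F$ refining the tail filter generated by $\{\{Y'\in\mathcal F:Y'\supset Y\}:Y\in\mathcal F\}$, form the $C^{*}$-algebra ultraproduct $E=\prod_{\mathcal U} B(H_Y)$ (in particular an operator space), and define $\varphi:X\to E$ by $\varphi(x)=(\varphi_Y(x))_{\mathcal U}$, with the convention $\varphi_Y(x)=0$ when $x\notin Y$ (irrelevant along $\mathcal U$, since $\{Y:Y\supset\mathrm{span}\{x\}\}\in\mathcal U$). Linearity is immediate because every finite linear combination lies in eventually all $Y\in\mathcal F$; contractivity follows from $\|\varphi(x)\|_E=\lim_{\mathcal U}\|\varphi_Y(x)\|\leq\|x\|$; and the ultralimit formula for matrix norms in the $C^{*}$-ultraproduct yields $\|[\varphi(x_{ij})]\|_{M_n(E)}=\lim_{\mathcal U}\|[\varphi_Y(x_{ij})]\|_{M_n(B(H_Y))}\geq c$. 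Since $E$ is a legitimate target in the supremum defining $\|[x_{ij}]\|_{M_n(Max(X))}$, this forces $a\geq c$, completing the proof.

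The main obstacle is precisely this gluing step: one cannot in general extend an individual $\varphi_Y:Y\to B(H_Y)$ to $X$ with comparable norm, because $B(H)$ is not $1$-injective as a Banach space, so keeping the codomain fixed is impossible. The ultraproduct construction circumvents the extension problem by enlarging the codomain together with the domain, relying only on the directedness of $\mathcal F$ (which is where infinite dimensionality of $X$ enters) and on the standard fact that ultraproducts of $C^{*}$-algebras are operator spaces whose matrix norms are computed as ultralimits of the matrix norms of the factors.
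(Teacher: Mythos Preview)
The paper does not prove this theorem; it is quoted from Paulsen \cite{vp96} and later invoked as a black box in the proof of Theorem~\ref{subspace}. There is therefore no proof in the present paper to compare your attempt against.

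That said, your argument is sound. The easy inequality is exactly the universal property of $Max(Y)$ applied to the isometric inclusion $Y\hookrightarrow Max(X)$. For the reverse inequality, the ultraproduct assembly of near-optimal contractions $\varphi_Y:Y\to B(H_Y)$ into a single contraction $\varphi:X\to\prod_{\mathcal U}B(H_Y)$ is a standard and correct device, and the identity $\|[\varphi(x_{ij})]\|_{M_n(E)}=\lim_{\mathcal U}\|[\varphi_Y(x_{ij})]\|\geq c$ then forces $a\geq c$ as desired. Two minor remarks. First, directedness of $\mathcal F$ holds regardless of the dimension of $X$ (the span of two finite-dimensional subspaces is finite-dimensional); infinite dimensionality enters only to make $1/\dim Y\to 0$ along $\mathcal U$, and you could sidestep this entirely by choosing $\varphi_Y$ with $\|[\varphi_Y(x_{ij})]\|\geq a_Y-\varepsilon$ for fixed $\varepsilon>0$ and letting $\varepsilon\downarrow 0$ at the end. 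Second, the formula is in any case trivially true when $X$ is finite dimensional (take $Y=X$), so the hypothesis in the statement is not essential to the conclusion.
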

 But
quotient spaces inherits the maximality as illustrated in the
following theorem \cite{gp03}.
\begin{theo}[\cite{gp03}]
\label{quot} If $X$ is a maximal operator space and $Y$ is a
closed subspace of $X$, then $Max(X/Y) \cong Max(X)/Y$ completely
isometrically.
\end{theo}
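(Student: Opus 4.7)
The plan is to show that the formal identity between $Max(X/Y)$ and $Max(X)/Y$ is a complete isometry. Since both spaces have $X/Y$ with its quotient Banach norm at the first matrix level, the identity is an isometry at level one; it then suffices to verify both complete-contractivity inequalities at higher matrix levels, and I will do this by playing off the universal property of $Max(-)$ against the universal property of the operator space quotient.

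For one direction, start with the canonical quotient $q: X \to X/Y$, which is a contraction of Banach spaces. Viewing it as $q: Max(X) \to Max(X/Y)$, the hypothesis $X = Max(X)$ together with the universal property of the maximal operator space gives $\|q\|_{cb} = \|q\| \leq 1$. Since $q$ vanishes on $Y$, the definition of the operator space quotient yields an induced completely contractive map $\tilde q : Max(X)/Y \to Max(X/Y)$. As a linear map, $\tilde q$ is the formal identity on $X/Y$, so this direction asserts that the matrix norms of $Max(X)/Y$ dominate those of $Max(X/Y)$.

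For the other direction, consider the formal identity $id : Max(X/Y) \to Max(X)/Y$. At the first matrix level this map is an isometry, hence a contraction into a bona fide operator space. The universal property of $Max(X/Y)$ then gives $\|id\|_{cb} = \|id\| = 1$, which is the reverse matrix-norm inequality.

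Combining the two inequalities yields the desired complete isometry. The only place the hypothesis $X = Max(X)$ is really used is in the first direction: without it, the Banach-space contraction $q: X \to X/Y$ would not automatically promote to a complete contraction on the operator space $X$. I do not anticipate any serious obstacle; the argument is essentially a formal manipulation of the two universal properties, with the only care required being to confirm that both quotient constructions produce the same Banach space at level one, which they do by construction.
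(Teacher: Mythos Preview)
Your argument is correct. The paper does not supply its own proof of this theorem; it is quoted from \cite{gp03} and used as a black box, so there is nothing to compare against directly. That said, the route you take is the standard one: the second direction is automatic from the maximality of $Max(X/Y)$ among admissible operator space structures on $X/Y$ (which is exactly what your universal-property computation encodes), and the substantive direction is the first, where the hypothesis $X=Max(X)$ is used to promote the Banach quotient map $q$ to a complete contraction before factoring through $Max(X)/Y$. One small remark: as stated, the hypothesis ``$X$ is maximal'' makes the conclusion read $Max(X/Y)\cong X/Y$, i.e.\ quotients of maximal spaces are maximal; your proof in fact establishes the slightly cleaner functorial statement $Max(X)/Y \cong Max(X/Y)$ for an arbitrary Banach space $X$, which is how the result is usually phrased.
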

 Also, if every
subspace of $Max(X)$ is maximal, then any two Banach isomorphic
subspaces of $X$ will be completely isomorphic as subspaces of
$Max(X)$. For, if $E$ and $F$ are Banach isomorphic subspaces of
$X$, then  $E$ and $F$ are completely isomorphic as subspaces of
$Max(X)$. In \cite{vk1}, the notion of hereditarily maximal spaces
is introduced. Hereditarily maximal spaces determine a subclass of
maximal operator spaces with the property that the operator space
structure induced on any subspace coincides with the maximal
operator space structure on that subspace. Also, it is proved that
the class of hereditarily maximal spaces includes all Hilbertian
maximal operator spaces. Since $\ell_1^2 $ has a unique operator
space structure, $\ell_1^2$ is a maximal operator space and all of
its subspaces are maximal. So, $\ell_1^2$ is an example for a
hereditarily maximal space which is not Hilbertian. The smallest
submaximal space structure $\mu(X)$, admissible on an operator
space $X$ is studied in \cite{tim04} and \cite{vk2}.

The following result   reveals the natural duality between
subspaces and quotient spaces.
\begin{theo}[\cite{er02}] \label{du} If $ Y$ is a closed subspace of an operator space $X$,
then,\\ $(X/Y)^* \cong Y^{\bot}$ and $ Y^* \cong X^*/ Y^{\bot}$
completely isometrically, where \\ \centerline{$Y^{\bot} = \{ f
\in X^* \ | \ f(y)=0, \ \forall y \in Y \}$.}
\end{theo}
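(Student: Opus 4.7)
The plan is to work at every matrix level using the identification $M_n(X^*) \cong CB(X,M_n)$ that defines the dual operator space structure, together with the operator space quotient structure $M_n(X/Y) = M_n(X)/M_n(Y)$. The two natural maps to study are $\Phi : Y^\bot \to (X/Y)^*$ defined by $\Phi(f)(x+Y) = f(x)$, and $\Psi : X^*/Y^\bot \to Y^*$ defined by $\Psi([f]) = f|_Y$. Both are bijections by the classical Banach space duality, so the content is the verification that they are complete isometries.

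For the first identification, I would upgrade the correspondence to matrix level $n$: a matrix $[f_{ij}] \in M_n(Y^\bot) \subset M_n(X^*)$ corresponds to a cb-map $F : X \to M_n$ whose entries vanish on $Y$, hence to a cb-map $\bar F : X/Y \to M_n$ with $F = \bar F \circ q$, where $q : X \to X/Y$ is the quotient map. The key fact is that $q$ is a complete quotient map (a complete metric surjection), which implies $\|\bar F\|_{cb} = \|F\|_{cb}$. Thus $\|[f_{ij}]\|_{M_n(X^*)} = \|F\|_{cb} = \|\bar F\|_{cb} = \|\Phi^{(n)}[f_{ij}]\|_{M_n((X/Y)^*)}$, giving the complete isometry.

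For the second identification, the matrix norm on $M_n(X^*/Y^\bot)$ is by definition $\inf\{\|[f_{ij} + g_{ij}]\|_{M_n(X^*)} : [g_{ij}] \in M_n(Y^\bot)\}$, which corresponds under $M_n(X^*) \cong CB(X,M_n)$ to $\inf\{\|F + G\|_{cb} : G : X \to M_n \text{ with } G|_Y = 0\}$. The inequality $\|F|_Y\|_{cb} \leq \|F+G\|_{cb}$ is immediate for any such $G$, since restriction to a subspace is a complete contraction. For the reverse inequality, I would invoke the operator space Hahn--Banach theorem (Arveson--Wittstock extension): any completely bounded map $F|_Y : Y \to M_n$ extends to a cb-map $\tilde F : X \to M_n$ with $\|\tilde F\|_{cb} = \|F|_Y\|_{cb}$. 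Then $F - \tilde F$ vanishes on $Y$, so $F - \tilde F$ represents a matrix in $M_n(Y^\bot)$, and $\|[f_{ij}]\|_{M_n(X^*/Y^\bot)} \leq \|\tilde F\|_{cb} = \|F|_Y\|_{cb} = \|\Psi^{(n)}[f_{ij}]\|_{M_n(Y^*)}$.

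The main technical ingredient, and the only place where operator space theory enters in an essential (rather than formal) way, is the Arveson--Wittstock extension theorem for completely bounded maps into $M_n$; the rest of the argument is a careful bookkeeping of matrix norms and the universal property of the operator space quotient. Once these two points are in place, the proof is essentially the operator space transcription of the classical Banach space duality between subspaces and quotients.
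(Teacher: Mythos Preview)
Your proof is correct and follows the standard argument that one finds in the cited reference: the identification $M_n(X^*)\cong CB(X,M_n)$ together with the fact that $q:X\to X/Y$ is a complete quotient map handles $(X/Y)^*\cong Y^\bot$, and the Arveson--Wittstock extension theorem is exactly the ingredient needed for $Y^*\cong X^*/Y^\bot$.

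Note, however, that the paper does not actually give its own proof of this theorem. It is stated with the citation \cite{er02} (Effros--Ruan) and used as a known black-box result in the subsequent discussion of submaximal spaces and $Q$-spaces. So there is no ``paper's own proof'' to compare against; you have supplied the standard proof from the reference, which is entirely appropriate.
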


 Let $Y$ be a submaximal space, say $Y \subset Max(X)$.
Then by Theorem \ref{du}, $Y^* \cong (Max(X))^*/Y^{\bot}$. But by
using Theorem \ref{minmaxduality}, we get {$(Max(X))^*/Y^{\bot}
\cong Min(X^*)/Y^{\bot} $} showing that the dual of a submaximal
space is a $Q$-space.

\noindent Conversely, if  $Z= Min(X) /Y$ is a $Q$-space, then by
Theorem \ref{du}, $Z^*=(Min(X) /Y)^* \cong
 Y^{\bot}$.
 But here, \begin{eqnarray*} Y^{\bot} &=& \{ f \in (Min(X))^* \ | \
f(y)=0, \ \forall y \in Y \}\\
&=& \{ f \in Max(X^*) \ | \ f(y)=0, \ \forall y \in Y \}
\end{eqnarray*} so that, $Z^*$ is a submaximal space. Thus, the dual
of a submaximal space is a $Q$-space and vice versa.

We have noted that $Q$-spaces need not be
 minimal. But, using Theorem \ref{du},   we observe that
  if $X$ is a Hilbertian operator space, then any $Q$-space in $X$ (i.e., any quotient of $Min(X)$)  is minimal.

Let $X$ be a Hilbertian operator space and $Y$ be a closed
subspace of $X$.
 Let $id: Min(Y^{\bot}) \to Min(X)$ be the formal
identity mapping, and

 $\pi: Min(X) \to Min(X)/Y$ be the quotient
map.

Then $\pi \circ id : Min(Y^{\bot}) \to Min(X)/Y$ is a complete
contraction. Since, $X/Y$ is isometric to $Y^{\bot}$, $Min(X/Y)$
is completely isometric to $Min(Y^{\bot})$, so that
\\ \centerline{$\|[x_{ij}+Y]\|_{M_n(Min(X)/Y)} \leq
\|[x_{ij}+Y]\|_{M_n(Min(X/Y))}.$} But, by definition of minimal
operator spaces, \\ \centerline{$
\|[x_{ij}+Y]\|_{M_n(Min(X/Y))}\leq
\|[x_{ij}+Y]\|_{M_n(Min(X)/Y)}.$} Thus, $Min(X)/Y$ is completely
  isometrically isomorphic to $Min(X/Y)$.

\begin{rem}\hfill

There are non-Hilbertian operator spaces $X$ for which all
$Q$-spaces in $X$ are minimal. For instance, $\ell_1^2$ is a
minimal operator space, which is not Hilbertian and all $Q$-spaces
in $\ell_1^2$ are minimal. Identification of a subclass of minimal
operator spaces with the property that the operator space
structure induced on any quotient space coincides with the minimal
operator space structure on that quotient space is still open.
\end{rem}
\section{Main Results}
We have noted that a subspace of a minimal operator space is
minimal, whereas a subspace of a maximal space need not be
maximal.  On the other hand, let us consider the following
question: If every proper, nontrivial subspace of an operator
space $X$ is minimal (maximal), is $X$ minimal? (maximal?) The
answer to this question is No. For instance, if $X$ is of
dimension 2, and if $\widetilde{X}$ is any operator space
structure on $X$ such that $\widetilde{X} \neq Min(X)$ ( resp.
$\widetilde{X} \neq Max(X)$) (such a space exists since any Banach
space of dimension greater than 2 has more than one quantization,
i.e., the space has more than one admissible operator space
structure \cite{gp03}.) Then any proper, nontrivial subspace of
$\widetilde{X}$ will be of dimension 1, and so is minimal (resp.
maximal). (Since there is only one operator space of dimension 1,
up to complete isometric isomorphism.)

But in the case of infinite dimensional operator spaces, we have
an affirmative  answer.

\begin{theo} \label{subspace}Let $X$ be an infinite dimensional operator space.
 If every finite dimensional subspace of $X$ is minimal\ (maximal),
then $X$ is minimal\ (maximal).
\end{theo}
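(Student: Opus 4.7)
The plan is to prove both statements essentially by reducing the full matrix norm on $X$ to matrix norms on finite dimensional subspaces, where our hypothesis applies directly. The two cases split cleanly because the relation between $Min$ and subspaces is trivial, while the relation between $Max$ and subspaces is controlled precisely by Paulsen's Theorem~\ref{vp}.

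For the minimal case, I would fix an arbitrary $[x_{ij}]\in M_n(X)$ and let $Y=\mathrm{span}\{x_{ij} : 1\leq i,j\leq n\}$, a finite dimensional subspace of $X$. Since $Y$ sits inside $X$ as an operator subspace, $\|[x_{ij}]\|_{M_n(Y)}=\|[x_{ij}]\|_{M_n(X)}$. By hypothesis $Y=Min(Y)$. The key observation is that $Min$ is stable under taking subspaces: the restriction map $Ball(X^*)\to Ball(Y^*)$ is surjective by Hahn--Banach, so under the canonical embedding $Min(Y)\hookrightarrow C(Ball(Y^*))$ the matrix norms coincide with those of $Min(X)$ computed on the same elements. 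Equivalently, the universal property of $Min$ together with $\|id:Y\to Min(X)\|=1$ yields $\|id:Y\to Min(X)\|_{cb}=1$, so $\|[x_{ij}]\|_{M_n(Min(X))}\geq \|[x_{ij}]\|_{M_n(Min(Y))}=\|[x_{ij}]\|_{M_n(Y)}=\|[x_{ij}]\|_{M_n(X)}$. Combined with the universal inequality $\|[x_{ij}]\|_{M_n(Min(X))}\leq\|[x_{ij}]\|_{M_n(X)}$, equality follows and $X=Min(X)$.

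For the maximal case, I would invoke Theorem~\ref{vp} directly, which is the real workhorse. Given $[x_{ij}]\in M_n(X)$, Paulsen's formula gives
\[
\|[x_{ij}]\|_{M_n(Max(X))} \;=\; \inf\bigl\{\,\|[x_{ij}]\|_{M_n(Max(Y))} : x_{ij}\in Y\subset X,\ \dim Y<\infty\bigr\}.
\]
Now, for every such finite dimensional $Y$, the hypothesis gives $Y=Max(Y)$, and since the operator space structure on $Y$ is inherited from $X$ we obtain $\|[x_{ij}]\|_{M_n(Max(Y))}=\|[x_{ij}]\|_{M_n(Y)}=\|[x_{ij}]\|_{M_n(X)}$. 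Hence the infimum equals $\|[x_{ij}]\|_{M_n(X)}$, giving $\|[x_{ij}]\|_{M_n(Max(X))}=\|[x_{ij}]\|_{M_n(X)}$. Because $Max(X)$ always dominates any admissible structure on $X$, this is exactly the statement $X=Max(X)$.

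There is no serious obstacle, since Theorem~\ref{vp} does all the heavy lifting in the maximal case and the subspace stability of $Min$ is built into its definition via $C(K)$. The only subtlety worth pausing on is making explicit the (easy) fact that $Min(Y)\hookrightarrow Min(X)$ is a complete isometry for $Y\subset X$; I would write this either via the $C(Ball(\cdot)^*)$ realization or via the universal property, whichever reads more cleanly in context. Notice also that the infinite-dimensionality of $X$ is used only through the appeal to Theorem~\ref{vp} in the maximal half; the minimal half is actually valid in all dimensions, in agreement with the already-cited fact that every subspace of a minimal space is minimal.
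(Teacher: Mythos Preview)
Your proposal is correct and matches the paper's proof: pass to the span of the matrix entries and use Hahn--Banach for the minimal half, and invoke Theorem~\ref{vp} directly for the maximal half. One small correction: the universal-property variant you offer in the minimal case is misdirected---applying it to $id:Y\to Min(X)$ yields $\|[x_{ij}]\|_{M_n(Min(X))}\le\|[x_{ij}]\|_{M_n(Y)}$, not $\ge$; the inequality you actually need is the one your Hahn--Banach argument (and the paper) gives, or, if you prefer the universal property, apply it instead to $id:Min(X)\big|_Y\to Min(Y)$.
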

\begin{proof}\hfill

First, assume that every finite dimensional subspace of $X$ is
minimal. Let $[x_{ij}] \in M_n(X)$. Then, $[x_{ij}] \in M_n(E)$,
where  $E= \mbox{span} \ \{x_{ij}\}$ and the dimension of $E\leq
n^2$. By assumption, $E$ is minimal and by using the Hahn-Banach
theorem, we have
\begin{align*}
  \left\| [x_{ij}]
   \right\|_{M_n(X)}&=   \left\| [x_{ij}]
   \right\|_{M_n(E)}  \\
   &=   \left\| [x_{ij}]
   \right\|_{M_n(Min(E))}  \\
   &= \mbox{sup} \{ \left\| [f(x_{ij})]
   \right\| \ |\  \ f \in Ball(E^*) \}\\
& \leq \mbox{sup} \{ \left\| [f(x_{ij})]
   \right\| \ |\  \ f \in Ball(X^*) \}\\
   &= \left\| [x_{ij}]
   \right\|_{M_n(Min(X))}
\end{align*}
Since $Min(X)$ is the smallest operator space structure on $X$,
this shows that $\left\| [x_{ij}]
   \right\|_{M_n(X)} = \left\| [x_{ij}]
   \right\|_{M_n(Min(X))} $ and hence $X$ is minimal.

 Now to prove the maximal case, by Theorem~\ref{vp}, for any  $[x_{ij}] \in
M_n(X)$, we have
\begin{align*}
 \left\| [x_{ij}]
   \right\|_{M_n (Max(X))}&=  \mbox{inf} \{ \left\| [x_{ij}]
   \right\|_{M_n(Max(Y))} \ |\  x_{ij} \in Y, \  \ Y \subset X ,\ finite \ dimensional \}
   \\    &=  \mbox{inf} \{ \left\| [x_{ij}]
   \right\|_{M_n (Y)} \ | \ x_{ij} \in Y,  \  \ Y \subset X ,\ finite \ dimensional \}  \\
   &\leq    \left\| [x_{ij}]
   \right\|_{M_n(X)}
\end{align*}
Since $Max(X)$ is the largest operator space structure on $X$,
this shows that $\left\| [x_{ij}] \right\|_{M_n (Max(X))}= \left\|
[x_{ij}]\right\|_{M_n (X)}$ and hence $X$ is maximal.
\end{proof}

We have noted that  quotients of  minimal operator spaces need not
be minimal, whereas  quotients of  maximal operator spaces  are
maximal. In the case of quotient spaces of an infinite dimensional
operator space, we have the following result.
\begin{theo}\label{qq}
Let $X$ be an infinite dimensional operator space.\\ (i). If every
quotient of
 $X$ by a proper closed nontrivial subspace of $X$ is minimal, then
$X$ is minimal.\\
(ii). If every quotient of  $X$ by a proper closed nontrivial
subspace of $X$ is maximal, then $X$ is maximal.
\end{theo}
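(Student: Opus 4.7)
The plan is to dualize the hypothesis and invoke Theorem~\ref{subspace} applied to $X^*$. Given any proper closed nontrivial subspace $Y\subset X$, Theorem~\ref{du} provides $(X/Y)^*\cong Y^\perp$ completely isometrically, where $Y^\perp$ carries the subspace operator space structure from $X^*$. Combined with Theorem~\ref{minmaxduality}, the assumption on $X/Y$ translates into a statement about $Y^\perp$: if $X/Y$ is minimal then $Y^\perp\cong Max((X/Y)^*)$ is maximal, and if $X/Y$ is maximal then $Y^\perp$ is minimal.

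The next observation is that every nonzero finite-dimensional subspace $F\subset X^*$ arises as $Y^\perp$ for a proper closed nontrivial $Y$: take $Y=\bigcap_{f\in F}\ker f$. This $Y$ is closed, $Y\neq X$ since $F\neq\{0\}$, and $Y\neq\{0\}$ since $X/Y$ injects into $F^*$ (finite-dimensional) while $X$ is infinite-dimensional; moreover $Y^\perp=F$ because finite-dimensional subspaces of $X^*$ are weak*-closed. Consequently every finite-dimensional subspace of $X^*$ is maximal in case~(i) and minimal in case~(ii). Since $X^*$ is infinite-dimensional, Theorem~\ref{subspace} yields $X^*=Max(X^*)$ in case~(i) and $X^*=Min(X^*)$ in case~(ii).

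For case~(i), Theorem~\ref{minmaxduality} gives $X^{**}\cong Min(X^{**})$; since the canonical embedding $X\hookrightarrow X^{**}$ is a complete isometry and subspaces of minimal operator spaces are minimal, $X$ itself is minimal. Case~(ii) is more delicate, since subspaces of maximal operator spaces need not be maximal. From $X^*=Min(X^*)$ and Theorem~\ref{minmaxduality} one has $(Max(X))^*\cong Min(X^*)=X^*$, hence $(Max(X))^{**}\cong X^{**}$. The canonical embeddings $X\hookrightarrow X^{**}$ and $Max(X)\hookrightarrow (Max(X))^{**}\cong X^{**}$ are complete isometries given by the same evaluation formula on the common underlying Banach space, so their images coincide and inherit identical matrix norms from $X^{**}$, forcing $X=Max(X)$.

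The main obstacle is precisely case~(ii): one cannot argue directly via subspaces, and the double-dual identification is essential. It shows that the two candidate operator space structures $X$ and $Max(X)$ on the common Banach space become the same subspace of the common bidual, and hence must coincide as operator spaces.
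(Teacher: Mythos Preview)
Your argument is correct and takes a genuinely different route from the paper's proof. The paper proceeds by direct matrix-norm computations: for part~(ii) it uses Theorem~\ref{norm} to write $\|[x_{ij}]\|_{M_n(X)}$ as a supremum over complete contractions $\varphi:X\to M_n$, observes that each such $\varphi$ has nontrivial kernel (since $X$ is infinite dimensional), and then invokes Theorem~\ref{quotientmap} together with the assumed maximality of $X/\ker\varphi$ to replace $\|\varphi\|_{cb}\le 1$ by $\|\varphi\|\le 1$; part~(i) is handled analogously via a supremum over finite-codimensional quotients. Your approach instead dualizes the hypothesis: via Theorems~\ref{du} and~\ref{minmaxduality} the assumption on quotients $X/Y$ becomes a statement about the annihilators $Y^\perp\subset X^*$, you then show every finite-dimensional subspace of $X^*$ arises as such a $Y^\perp$, and invoke Theorem~\ref{subspace} for $X^*$. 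This is elegant in that it exhibits Theorem~\ref{qq} as the dual of Theorem~\ref{subspace}, and the reduction is clean. The cost is that you must import one standard fact not stated in the paper---that the canonical embedding $X\hookrightarrow X^{**}$ is a complete isometry (see \cite{er02})---and in case~(ii) you need the careful bidual identification $(Max(X))^{**}=X^{**}$ to circumvent the failure of submaximality to imply maximality. The paper's argument is more self-contained in this respect, while yours makes the subspace/quotient duality transparent.
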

For proving this, we make use of the following theorems.
\begin{theo}[\cite{er02}]\label{norm}
If $X$ is any operator space and $[x_{ij}]\in M_n(X)$, there
exists a complete contraction $\varphi: X \rightarrow M_n  $ such
that $\| \varphi^{(n)}([x_{ij}])\|= \|[x_{ij}]\|.$
\end{theo}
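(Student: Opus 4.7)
The plan is to build $\varphi$ explicitly from an almost-optimal pair of vectors witnessing the norm of $T := [x_{ij}]$, and then pass to a weak$^{*}$ limit to obtain exact equality. By Ruan's representation theorem I may assume $X \subset B(\mathcal H)$ completely isometrically, so that $\|T\|_{M_n(X)}$ coincides with the operator norm of $T$ on $\mathcal H^{n} = \mathbb C^{n} \otimes \mathcal H$. Fix $\varepsilon > 0$ and pick unit vectors $\xi, \eta \in \mathcal H^{n}$ with $|\langle T\xi, \eta\rangle| \ge \|T\| - \varepsilon$.

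The technical heart of the argument is to factor $\xi$ and $\eta$ through the ``small'' space $\mathbb C^{n}$ via a Schmidt decomposition in $\mathbb C^{n} \otimes \mathcal H$. Writing $\xi = \sum_{k=1}^{r} \sigma_{k}\, u_{k} \otimes v_{k}$ with $\{u_{k}\} \subset \mathbb C^{n}$ and $\{v_{k}\} \subset \mathcal H$ orthonormal and $\sigma_{k} > 0$, I define a partial isometry $V : \mathbb C^{n} \to \mathcal H$ by $V e_{k} = v_{k}$ for $k \le r$ (and $0$ otherwise), and a vector $\zeta = \sum_{k} \sigma_{k}\, u_{k} \otimes e_{k} \in \mathbb C^{n} \otimes \mathbb C^{n}$. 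Then $\xi = (I_{n} \otimes V)\zeta$, $\|V\| \le 1$, and $\|\zeta\| = \|\xi\| \le 1$. A symmetric construction yields $W : \mathbb C^{n} \to \mathcal H$ and $\tilde\zeta \in \mathbb C^{n} \otimes \mathbb C^{n}$ with $\eta = (I_{n} \otimes W)\tilde\zeta$, $\|W\| \le 1$, $\|\tilde\zeta\| \le 1$.

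Set $\varphi(x) := W^{*} x V \in M_{n}$. As the restriction to $X$ of the map $a \mapsto W^{*} a V$ on $B(\mathcal H)$, $\varphi$ is completely contractive, since amplifications satisfy $[W^{*} a_{ij} V] = (I_{m} \otimes W^{*})[a_{ij}](I_{m} \otimes V)$ and $\|W^{*}\|\,\|V\| \le 1$. In particular $\varphi^{(n)}([x_{ij}]) = (I_{n} \otimes W^{*})\, T\, (I_{n} \otimes V) \in M_{n}(M_{n})$, and pairing against $\zeta, \tilde\zeta$ gives
\[
\langle \varphi^{(n)}([x_{ij}])\zeta,\, \tilde\zeta\rangle \;=\; \langle T(I_{n}\otimes V)\zeta,\, (I_{n}\otimes W)\tilde\zeta\rangle \;=\; \langle T\xi, \eta\rangle,
\]
whence $\|\varphi^{(n)}([x_{ij}])\| \ge \|T\| - \varepsilon$. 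The reverse inequality $\|\varphi^{(n)}([x_{ij}])\| \le \|T\|$ is automatic from $\|\varphi\|_{cb} \le 1$, so we have approximate equality.

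To upgrade approximate equality to exact equality, I would invoke compactness: the unit ball of $CB(X, M_{n})$ is weak$^{*}$-compact (via the identification $CB(X, M_{n}) \cong M_{n}(X^{*})$). Letting $\varepsilon_{k} \downarrow 0$, any weak$^{*}$ cluster point $\varphi$ of the corresponding maps $\varphi_{\varepsilon_{k}}$ is a complete contraction; because $M_{n}(M_{n})$ is finite dimensional, pointwise convergence on the (finitely many) entries $x_{ij}$ forces $\varphi_{\varepsilon_{k}}^{(n)}([x_{ij}]) \to \varphi^{(n)}([x_{ij}])$ in norm, giving $\|\varphi^{(n)}([x_{ij}])\| = \|T\|$. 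The main obstacle is the construction in the Schmidt step: a naive choice such as $V e_{j} = \xi_{j}$ forces the corresponding $\zeta$ to have norm $\sqrt{n}$ and only yields the lower bound $\|T\|/n$; the Schmidt decomposition is precisely what restores the sharp constant and makes $\varphi$ both completely contractive and norm-attaining on $[x_{ij}]$.
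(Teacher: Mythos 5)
The paper does not prove this statement---it is quoted from Effros--Ruan \cite{er02} and used as a black box---so there is no internal proof to compare against; judged on its own, your argument is correct and is essentially the standard proof of this classical lemma. Realizing $X\subset B(\mathcal H)$, using a Schmidt (equivalently, polar) decomposition of the near-optimal vectors in $\mathbb C^n\otimes\mathcal H$ to produce contractions $V,W:\mathbb C^n\to\mathcal H$ and unit vectors $\zeta,\tilde\zeta\in\mathbb C^n\otimes\mathbb C^n$ with $(I_n\otimes V)\zeta=\xi$, $(I_n\otimes W)\tilde\zeta=\eta$, and then setting $\varphi(x)=W^*xV$, is exactly the device that avoids the $\sqrt n$ loss you flag; the final passage from $\varepsilon$-attainment to exact attainment via pointwise compactness of the set of complete contractions into the finite-dimensional $M_n$ is also sound.
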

\begin{theo}[\cite{bm04}]\label{quotientmap}
Let $X$ and $Z$ be operator spaces. If $\phi: X \to Z $ is
completely bounded, and if $Y$ is a closed subspace of $X$
contained in $ker(\phi)$, then the canonical map $\tilde{\varphi}:
X/Y \to Z$ induced by $\varphi$ is also completely bounded, with
$\| \tilde{\varphi}\|_{cb}= \|\varphi\|_{cb}.$ If $Y =
ker(\varphi)$, then $\varphi$ is a complete quotient map if and
only if $\tilde{\varphi}$ is a completely isometric isomorphism.
\vspace{0.0 in}
\end{theo}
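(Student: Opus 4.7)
The plan is to unpack the definitions of the matrix norms on the quotient operator space $X/Y$ and of a complete quotient map, and then establish both directions by elementary norm estimates together with the factorization $\varphi = \tilde{\varphi} \circ \pi$, where $\pi : X \to X/Y$ is the canonical quotient map. Recall that the operator space quotient is defined by $M_n(X/Y) = M_n(X)/M_n(Y)$ isometrically, so $\|[x_{ij}+Y]\|_{M_n(X/Y)} = \inf\{\|[x_{ij}+y_{ij}]\|_{M_n(X)} : y_{ij} \in Y\}$, and $\pi$ is automatically a complete contraction (indeed a complete quotient map by definition). Since $Y \subset \ker(\varphi)$, the map $\tilde{\varphi}(x+Y) := \varphi(x)$ is well defined and linear.

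For the first assertion, to bound $\|\tilde{\varphi}\|_{cb}$ from above, fix $[x_{ij}+Y] \in M_n(X/Y)$ and any representative $[x_{ij}+y_{ij}]$ with $y_{ij} \in Y$. Since $\varphi$ vanishes on $Y$, one has $\tilde{\varphi}^{(n)}([x_{ij}+Y]) = \varphi^{(n)}([x_{ij}+y_{ij}])$, and hence $\|\tilde{\varphi}^{(n)}([x_{ij}+Y])\| \leq \|\varphi\|_{cb}\,\|[x_{ij}+y_{ij}]\|_{M_n(X)}$. Taking the infimum over all admissible $[y_{ij}]$ yields $\|\tilde{\varphi}^{(n)}([x_{ij}+Y])\| \leq \|\varphi\|_{cb}\,\|[x_{ij}+Y]\|_{M_n(X/Y)}$, and so $\|\tilde{\varphi}\|_{cb} \leq \|\varphi\|_{cb}$. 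For the reverse inequality, factor $\varphi = \tilde{\varphi} \circ \pi$; since $\pi$ is a complete contraction, $\|\varphi\|_{cb} \leq \|\tilde{\varphi}\|_{cb}\,\|\pi\|_{cb} \leq \|\tilde{\varphi}\|_{cb}$. The two inequalities combined give $\|\tilde{\varphi}\|_{cb} = \|\varphi\|_{cb}$.

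For the second assertion, assume $Y = \ker(\varphi)$, so that $\tilde{\varphi}$ is a linear bijection of $X/Y$ onto the image of $\varphi$ inside $Z$. Saying that $\varphi$ is a \emph{complete quotient map} means, by definition, that for each $n$ the map $\varphi^{(n)} : M_n(X) \to M_n(Z)$ is a surjective quotient map of Banach spaces, i.e.\ the open unit ball of $M_n(Z)$ equals the image of the open unit ball of $M_n(X)$. Using the isometric identification $M_n(X/Y) = M_n(X)/M_n(Y)$, this condition is exactly equivalent to saying that the induced map $\tilde{\varphi}^{(n)} : M_n(X/Y) \to M_n(Z)$ is a surjective isometry for every $n$, which is the statement that $\tilde{\varphi}$ is a completely isometric isomorphism. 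Both implications of the equivalence follow from this translation: if $\varphi$ is a complete quotient map, the previous sentence gives that $\tilde{\varphi}^{(n)}$ is an isometric bijection for every $n$; conversely, if $\tilde{\varphi}$ is a completely isometric isomorphism, then $\varphi^{(n)} = \tilde{\varphi}^{(n)} \circ \pi^{(n)}$ is the composition of a complete quotient map with a complete isometry, hence is a quotient map at every matrix level.

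The main subtlety, rather than an obstacle, is the careful bookkeeping at the $n$-th matrix level to ensure that the infimum defining $\|[x_{ij}+Y]\|_{M_n(X/Y)}$ ranges over genuine matrices $[y_{ij}] \in M_n(Y)$ (not merely entrywise corrections), and that the identification $M_n(X/Y) \cong M_n(X)/M_n(Y)$ is used isometrically; once this is in place, parts (a) and (b) reduce to the standard Banach-space quotient argument applied uniformly in $n$.
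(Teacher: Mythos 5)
Your proof is correct. The paper does not prove this statement at all --- it is quoted verbatim from Blecher--Le Merdy's monograph and used as a black box --- so there is nothing to compare against; your argument is the standard one (factor $\varphi=\tilde{\varphi}\circ\pi$, use the isometric identification $M_n(X/Y)=M_n(X)/M_n(Y)$, and note that $\ker(\varphi^{(n)})=M_n(\ker\varphi)$ since $\varphi^{(n)}$ acts entrywise), and it is complete as written.
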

\begin{proof} [Proof of Theorem \ref{qq}]\hfill

We first prove the second part. Let $[x_{ij}]\in M_n(X)$. By
Theorem~\ref{norm}, we have
\begin{eqnarray}\label{eq1}
  \|[x_{ij}]\|_{M_n(X)}&=& \mbox{sup} \{\|[\varphi(x_{ij})]\| \ \ | \ \varphi: X
\rightarrow M_n, \| \varphi \|_{cb}\leq1 \}
\end{eqnarray}
Since $X$ is infinite dimensional, and  the range of $\varphi$ is
finite dimensional,  $\varphi$ has a nontrivial kernel,
$ker(\varphi)$. Let $\widetilde{\varphi}: X/ker(\varphi) \to M_n$
be the canonical map defined by
$\widetilde{\varphi}(x+ker(\varphi))= \varphi(x)$. Then by
Theorem~\ref{quotientmap},
$\|\varphi\|_{cb}=\|\widetilde{\varphi}\|_{cb}=\|\widetilde{\varphi}\|=\|\varphi\|$,
where the last but one equality follows from the assumption that
$X/ker(\varphi)$ is maximal. Thus in equation (\ref{eq1}), we can
replace $\|\varphi\|_{cb}\leq1$ by $\|\varphi\|\leq1$, so that we
have
\begin{eqnarray}\label{eq2}
  \|[x_{ij}]\|_{M_n(X)}&=& \mbox{sup} \{\|[\varphi(x_{ij})]\| \ | \  \varphi: X
\rightarrow M_n, \| \varphi \|\leq1 \}
\end{eqnarray}
Applying equation (\ref{eq1}) to the operator space $Max(X)$, and
using the universal property of $Max(X)$ and the equation
(\ref{eq2}), we obtain
\begin{eqnarray*}
  \|[x_{ij}]\|_{M_n(Max(X))}&=& \mbox{sup} \{\|[\varphi(x_{ij})]\| \ | \ \varphi:
  Max(X)
\rightarrow M_n, \| \varphi \|_{cb}\leq1 \}\\ \end{eqnarray*}
\begin{eqnarray*}
 &=&\mbox{sup} \{\|[\varphi(x_{ij})]\| \ |\ \varphi:
  Max(X)
\rightarrow M_n, \| \varphi \|\leq1 \}\\
 &=&\mbox{sup} \{\|[\varphi(x_{ij})]\|\ |\  \varphi:
  X \rightarrow M_n, \| \varphi \|\leq1 \}\\
 &=&  \|[x_{ij}]\|_{M_n(X)}
\end{eqnarray*}
This shows that $X = Max(X)$ and so $X$ is maximal.

For proving the first part, we note that for any $[x_{ij}]\in
M_n(X)$,
\begin{eqnarray}\label{eq3}
\|[x_{ij}]\|_{M_n(X)}&=& \mbox{sup} \{\|[x_{ij}+Y]\|_{M_n(X/Y)} \
| \   Y  \subset  X \}
\end{eqnarray}
Now, we assume that every quotient of
 $X$ by a proper closed nontrivial subspace of $X$ is minimal.\\ Let $[x_{ij}]\in
M_n(X)$.  We
 claim that
\begin{eqnarray}  \|[x_{ij}]\|_{M_n(X)}&=&
\mbox{sup} \{\|[x_{ij}+Y]\|_{M_n(X/Y)}\ | \  Y  \subset  X ,
dim(X/Y)< \infty \} \nonumber
\end{eqnarray}
From equation (\ref{eq3}), we have
\begin{eqnarray}  \|[x_{ij}]\|_{M_n(X)}&\geq&
\mbox{sup} \{\|[x_{ij}+Y]\|_{M_n(X/Y)}\ |\ \  Y  \subset  X ,
dim(X/Y)< \infty \} \nonumber
\end{eqnarray}
By Theorem \ref{norm}, there exists a complete contraction
$\varphi: X \to M_n$ such that $\| \varphi^{(n)}([x_{ij}])\|=
\|[x_{ij}]\|_{M_n(X)}.$ Let $Y= ker(\varphi)$, then $X/Y$ is
finite dimensional and let $\tilde{\varphi}$ be the canonical map.
Then,$$\|[x_{ij}]\|_{M_n(X)} = \| \varphi^{(n)}([x_{ij}])\|= \|
\tilde{\varphi}^{(n)}([x_{ij}+Y])\| \leq
\|[x_{ij}+Y]\|_{M_n(X/Y)}.$$ This implies, \begin{eqnarray}
\|[x_{ij}]\|_{M_n(X)}&\leq& \mbox{sup}
\{\|[x_{ij}+Y]\|_{M_n(X/Y)}\ | \  Y  \subset  X, dim(X/Y)< \infty
\} \nonumber
\end{eqnarray} This proves the claim.

  Now let $id: Min(X) \to
X$ be the formal identity map. Let $Y$ be a subspace of $X$ such
that $ dim(X/Y)< \infty$ and $\pi: X \to X/Y$ be the quotient
mapping. Then, $\widetilde{id}= \pi \circ id : Min(X) \to X/Y$ is
given by $\widetilde{id}(x)= x+Y$. Since, by assumption, $X/Y$ is
minimal and  by using the universal property of minimal spaces,
$\widetilde{id}$ is completely bounded and
$\|\widetilde{id}\|_{cb}= \|\widetilde{id}\|\leq \|\pi\|\|id\|\leq
1$. This shows that $\widetilde{id}$ is completely contractive.
Therefore,
\begin{eqnarray*} \|[x_{ij}]\|_{M_n(X)} &=&\mbox{sup} \{\|[x_{ij}+Y]\|_{M_n(X/Y)} \ | \  Y \subset X,  dim(X/Y)<
\infty \}\\
&=& \mbox{sup} \{\|[\widetilde{id}(x_{ij})]\|_{M_n(X/Y)}\ | \ Y
\subset X,  dim(X/Y)< \infty \}\\
&\leq& \| [x_{ij}]\|_{M_n(Min(X))}.
\end{eqnarray*}
This shows that $X= Min(X)$, and so $X$ is minimal.
\end{proof}
\begin{rem}\hfill

If $X$ is finite dimensional, the above result need not be true.
For instance, if $X$ is an operator space of dimension 2 which is
not minimal (maximal), then every quotient of $X$ by a proper,
nontrivial closed subspace will be of dimension 1 and hence is
minimal (maximal).
\end{rem}

 \em Acknowledgements \em. The first
author acknowledge the financial support by University Grants
Commission of India, under Faculty Development Programme. The
first author is grateful to Prof. Gilles Pisier, Prof. Vern
Paulsen and Prof. Eric Ricard for having many discussions on this
subject.

Corresponding Author : Vinod Kumar. P

\end{document}